\newtheorem{theorem}{Theorem}
\newtheorem*{proposition}{Proposition}
\begin{document}

\title[]{Localization of quantum states \\ and landscape functions}
\author[]{Stefan Steinerberger}
\address{Stefan Steinerberger, Department of Mathematics, Yale University, 10 Hillhouse Avenue, New Haven, CT 06511, USA} \email{stefan.steinerberger@yale.edu}

\begin{abstract} Eigenfunctions in inhomogeneous media can have strong localization 
properties. Filoche \& Mayboroda showed that the function $u$ solving $(-\Delta + V)u = 1$ controls the behavior of eigenfunctions $(-\Delta + V)\phi = \lambda\phi$ via the inequality
$$|\phi(x)| \leq \lambda u(x) \|\phi\|_{L^{\infty}}.$$
This inequality has proven to be remarkably effective in predicting localization and recently Arnold, David, Jerison, Mayboroda \& Filoche connected $1/u$ to decay properties 
of eigenfunctions. We aim to clarify properties of the landscape: the main ingredient is a localized variation estimate obtained from writing $\phi(x)$
as an average over Brownian motion $\omega(\cdot)$ in started in $x$
$$\phi(x) =  \mathbb{E}_{x}\left(\phi(\omega(t)) e^{\lambda t-\int_{0}^{t}{V(\omega(z))dz}} \right).$$
This variation estimate will guarantee that $\phi$ has to change at least by a factor of 2 in a small ball, which implicitly creates a landscape whose relationship with $1/u$ we discuss.

\end{abstract}
\maketitle

\section{Introduction}
\subsection{The Landscape function.} It is well known that physical systems comprised of inhomogeneous materials can exhibit peculiar vibration properties: let $\Omega \in \mathbb{R}^n$ be open, bounded and
\begin{align*}
(-\Delta + V)\phi = \lambda \phi \qquad \mbox{in~}\Omega  ~\mbox{with Dirichlet boundary conditions,}
\end{align*}
where $V:\Omega \rightarrow \mathbb{R}_{\geq 0}$ is a real-valued, nonnegative potential. Anderson \cite{anderson} noticed that for some potentials the low-lying eigenfunctions tend to
strongly localize in a subregion of space in a very complicated manner. 
It seemed difficult to get any information about the localization behavior of these first few eigenfunctions without explicitely computing them. \\

In a truly remarkable contribution, Filoche \& Mayboroda \cite{fil} have given a simple but astonishingly effective method to predict the behavior of low-energy eigenfunctions. Their approach is based on the following inequality (originally due to Moler \& Payne \cite{mol}): if we associate to the problem
a \textit{landscape function} $u:\Omega: \mathbb{R} \rightarrow \mathbb{R}_{}$ given as the solution of
\begin{align*}
(-\Delta + V)u = 1 \qquad \mbox{in~}\Omega \subset \mathbb{R}^n~~~~~ \mbox{with Dirichlet boundary conditions,}
\end{align*}
then there is the inequality
$$ |\phi(x)|  \leq \lambda u(x)  \|\phi\|_{L^{\infty}(\Omega)} .$$ 
The regions where $u$
is small will be of particular interest because an eigenfunction $\phi$ can only localize in $\left\{x: \lambda u(x) \geq 1\right\} \subset \Omega$. The landscape function turns out to be more effective than that: it is instructive
to regard the graph of $u(x)$ as a landscape comprised of 'peaks' and 'valleys'; the valleys may then be understood as inducing a partition of the domain. Numerical experiments \cite{fil} suggest that low-lying
eigenfunctions respect that partition and favor localization in one or at most a few elements in that partition. Moreover, these localized eigenfunctions are 'almost' compactly supported in the sense
that in crossing from one element of the partition to another eigenfunctions seem to experience exponential decay when crossing the valley (see \cite{fil}).

\subsection{The effective potential.} Concerning this exponential drop in size of an eigenfunction when crossing valley, this was recently made precise by Arnold, David, Jerison, Mayboroda \& Filoche \cite{arnold} who point out that the inverse of the landscape function $1/u(x)$ acts as an effective potential responsible for the exponential decay of the localized states (the connection being that $u(x)$ is small in valleys, which makes $1/u(x)$ large and large potentials induce large decay). Their approach is based on writing an eigenfunction as $\phi = u \psi$ for some unknown function $\psi$. The equation
$$ \left( - \Delta + V \right) \phi = \lambda \phi$$
then transforms into
$$ \left[ \frac{1}{u^2} \mbox{div}(u^2 \nabla \psi) \right] +  \frac{1}{u} \psi = \lambda \psi.$$
The new dominating potential $W \equiv 1/u$ is now responsible for the underlying dynamics. The next step is to build an Agmon distance
$$ \rho(r_1, r_2) = \min_{\gamma}\left( \int_{\gamma}{\sqrt{(W(r) - \lambda)_{+}} ds} \right),$$
where $\gamma$ ranges over all paths from $r_1$ to $r_2$
and use Agmon's inequality \cite{agmon} to deduce that for eigenfunctions $\phi$ localized in $r_0 \in \Omega$ 
$$ |\phi(r)| \lesssim e^{-\rho(r_0, r)}.$$
This indicates that $W \equiv 1/u$ is playing a distinguished role. The paper \cite{arnold} also gives convincing numerical evidence that $W - \lambda$
seems to predict decay more accurately than the classical quantity $V-\lambda$. This might seem surprising because clearly $V$ determines the behavior of the eigenfunctions.

\subsection{Organization.} The purpose of our paper is to further clarify these observations and the interplay between an eigenfunction doubling its size in a small ball and the landscape 
function; the main tool is an identity following from the Feynman-Kac formula. More precisely, we

\begin{itemize}
\item derive and discuss the relevant identity,
\item use it to prove a variation estimate localized in a small ball,
\item show how w.r.t. decay $V$ is not as important as a suitable mollification of $V$,
\item compare how $1/u(x)$ fits into that framework
\item and discuss some refinements of the landscape function $u(x)$.
\end{itemize}
We always assume
that $\Omega \subset \mathbb{R}^n$ is bounded with a smooth boundary and $V \in C^2(\Omega)$ to be continuous. 
Technically, this excludes 'block potentials' (which are only $L^{\infty}$) but it is clear that the first few eigenfunctions hardly change if a potential is replaced
by a suitably mollification and therefore the assumption is without loss of generality.

\section{Local analysis of the heat flow}
\subsection{The torision function.} The landscape function arising from $V = 0$,  i.e. the solution of
$$ -\Delta v = 1 \qquad \mbox{in~}\Omega  ~\mbox{with Dirichlet boundary conditions,}$$
is a classical object in shape optimization called the \textit{torsion function}. It appears in elasticity theory \cite{bandle}, heat conduction \cite{vandenberg} and geometry \cite{mark}. 
A version of a landscape function with potential already appeared in the context of homogenization in work of Coifman \& Meyer (unpublished, but see the application to parabolic operators by
S. Wu \cite{wu}). The most prominent role of the torsion function in the field of shape optimization (see e.g. \cite{ban}) is that $v(x)$ gives the expected lifetime of Brownian motion started
in $x$
until it hits the boundary. This suggests to interpret the landscape function in that language; the idea of using Brownian motion to analyze decay properties of eigenfunctions is classical
and
was very successfully used in seminal papers by Carmona \cite{carmona}, Carmona \& Simon \cite{simon1}, Carmona, Masters \& Simon 
\cite{simon2}, Simon \cite{simon3} and others; recently, a similar technique was used by the author \cite{stein} to obtain bounds on the size of nodal sets of Laplacian 
eigenfunctions $\left\{x : \phi(x) = 0\right\}$ on compact manifolds.

\subsection{The idea.} The crucial ingredient is a simple equation representing an eigenfunction $\phi(x)$ as a localized average over local Brownian motion paths running for a short time. 
This equation is not new and has been used earlier for very similar purposes, see e.g. Carmona \& Simon \cite{simon1}.
This equation is obtained by looking at the effect of the 
semigroup $e^{t(\Delta - V)}$ on the eigenfunction. Since eigenfunctions diagonalize the semigroup, we have that if 
$$ (-\Delta + V)\phi = \lambda \phi \qquad \mbox{then} \qquad e^{t(\Delta - V)}\phi = e^{-\lambda t} \phi.$$
At the same time, there is another interpretation of the action of the semigroup in terms of Brownian motion. The
 \textit{Feynman-Kac formula} states that for an arbitrary function $f$
$$ e^{t(\Delta - V)}f(x) = \mathbb{E}_{x}\left(f(\omega(t)) e^{-\int_{0}^{t}{V(\omega(z))dz}} \right),$$
where the expectation $\mathbb{E}_x$ is taken with respect to Brownian motion $\omega(\cdot)$ started in $x$, running for time $t$
and destroyed upon impact on the boundary.
Combining these two equations, we get 
$$ \forall t \geq 0 \qquad \phi(x) =  \mathbb{E}_{x}\left(\phi(\omega(t)) e^{\lambda t-\int_{0}^{t}{V(\omega(z))dz}} \right).$$

This equation describes a complicated relationship between $\phi(x)$, $\lambda$ and $V$, however, it is perfectly suited for establishing a variation estimate in a small ball: assuming the eigenfunction $\phi$ to be essentially constant on a small scale allows us to move the eigenfunction $\phi$ out of the
expectation at the cost of a very small error. We sketch a non-rigorous version of the argument.

\begin{center}
\begin{figure}[h!]
\begin{tikzpicture}[scale=1.2]
\coordinate [label=left:$x_0$] (x) at (0,0);
\coordinate [] (y) at (1,1);
\node (D) [name path=D,draw,circle through=(y),label=left:$B$] at (x) {};
\draw (0,0) \foreach \x in {1,...,400}{--++(rand*0.08,rand*0.08)};
\end{tikzpicture}
\caption{Brownian motion started in $x_0$.}
\end{figure}
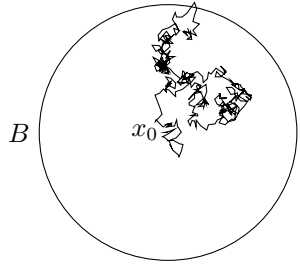
\end{center}

Assume w.l.o.g. that $\phi(x_0) > 0$. Let $B = B(x_0, r)$ be the ball centered at $x_0$ with
maximal radius $r>0$ such that
$$ \forall x \in B: \frac{1}{2} |u(x_0)| \leq |u(x)| \leq 2 |u(x_0)|.$$
Assume furthermore that $V(x_0) \geq \lambda$ and that $V$ is essentially constant on $B$.
We now consider the equation above for $t = c_n r^2$ with $c_n$ a small universal constant depending only on the dimension of $\Omega$. By making $c_n$ sufficiently small,
we can ensure that 99\% of all Brownian paths spend are fully contained in $B$ up to time $t$.  Furthermore,
 $$  \phi(x_0) =  \mathbb{E}_{x_0}\left(\phi(\omega(t)) e^{\lambda t-\int_{0}^{t}{V(\omega(z))dz}} \right) \sim \phi(x_0) \mathbb{E}_{x_0}\left(e^{\lambda t-\int_{0}^{t}{V(\omega(z))dz}} \right).$$
In order for this to hold, we clearly require
$$ \mathbb{E}_{x_0}e^{\lambda t-\int_{0}^{t}{V(\omega(z))dz}} \sim 1 \qquad \mbox{and thus} \qquad -1 \ll \lambda t-\int_{0}^{t}{V(\omega(z))dz}  \leq 0.$$
However, this last quantity can be approximated by
$$ 0 \geq  \lambda t-\int_{0}^{t}{V(\omega(z))dz}  \sim t\left(\lambda - V(x_0)\right) \sim r^2\left(\lambda - V(x_0)\right) \gg -1$$
which is a statement about the maximal size of $r$ depending on $\lambda - V(x_0)$. We will now make this heuristic sketch precise and phrase it in classical terms. However, we emphasize that the most useful way of thinking about the setup to be in terms of path integrals: the condition $V(x) - \lambda \geq c$ could, for example, be replaced by $V(x) - \lambda \geq 0$ and $V(x) - \lambda \geq c$ true 'on average'.

\begin{theorem}[Variation estimate] Suppose $(-\Delta + V)\phi = \lambda \phi$ for $V \geq 0$. There exists a universal constant $c_n$ (depending only on the dimension) such
that the following holds: if, for some $c > 0$,
$$ V(x) - \lambda \geq c \qquad \mbox{or} \qquad V(x) - \lambda \leq -c$$
uniformly on the ball
$$ B = B\left(x_0, \frac{c_n}{\sqrt{c}}  \sqrt{\frac{\lambda}{c} + \log{ \left(c_n  \frac{\|\phi \|_{L^{\infty}}}{| \phi(x_0)|}  \right) }}     \right) \subset \Omega,$$
then we have
$$ \frac{\sup_{x \in B}{|\phi(x)|}}{\inf_{x \in B} |\phi(x)|} \geq 2.$$
\end{theorem}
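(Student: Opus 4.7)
The plan is to argue by contradiction using the Feynman-Kac identity developed in the previous subsection. Assume without loss of generality $\phi(x_0)>0$, and suppose for contradiction $\sup_{B}|\phi|/\inf_{B}|\phi|<2$. By continuity $\phi$ cannot change sign on $B$ (a zero would send the infimum to $0$), so $\phi>0$ throughout $B$, and a short manipulation yields the two-sided pinch
\[
\tfrac{1}{2}\phi(x_0) \;<\; \phi(x) \;<\; 2\phi(x_0) \qquad \text{for all } x\in B.
\]

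The core step is to choose a time $t:=A/c$, where $A$ is a large absolute constant to be fixed, apply the identity, and split the expectation according to whether the Brownian path $\omega$ stays inside $B$ on $[0,t]$ (event $\mathcal{S}$) or exits before time $t$ (event $\mathcal{S}^{c}$ of probability $p$). On $\mathcal{S}$ the hypothesis gives either $\int_{0}^{t}V(\omega(z))\,dz \geq (\lambda+c)t$, so $e^{\lambda t-\int V} \leq e^{-ct}$, or the reverse estimate $e^{\lambda t-\int V} \geq e^{ct}$; combined with the pinch on $\phi$, the $\mathcal{S}$-contribution is bounded above by $2\phi(x_0)e^{-ct}$ in the first case and below by $\tfrac{1}{2}\phi(x_0)e^{ct}(1-p)$ in the second. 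On $\mathcal{S}^{c}$ one uses only $V\geq 0$ and $|\phi|\leq \|\phi\|_{L^{\infty}}$ to control the contribution in absolute value by $\|\phi\|_{L^{\infty}}e^{\lambda t}p$. Writing $K:=\|\phi\|_{L^{\infty}}/\phi(x_0)\geq 1$ and substituting into $\phi(x_0)=\mathbb{E}_{x_0}[\cdots]$, each case reduces to an inequality whose failure yields a contradiction; schematically one needs
\[
2e^{-ct}+Ke^{\lambda t}p \;<\; 1 \quad\text{(case 1)}, \qquad \tfrac{1}{2}e^{ct}(1-p)-Ke^{\lambda t}p \;>\; 1 \quad\text{(case 2)}.
\]

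The final ingredient is the standard Gaussian tail estimate
\[
p \;=\; \mathbb{P}_{x_0}\!\Bigl(\sup_{s\leq t}|\omega(s)-x_0|\geq r\Bigr) \;\leq\; C_{n}\,e^{-r^{2}/(C_{n}t)},
\]
which, with $A$ fixed so that $e^{\pm ct}=e^{\pm A}$ is a harmless absolute constant, reduces both contradictions to the single requirement $r^{2}/(C_{n}t)\gtrsim \log K+\lambda t$. Inserting $t=A/c$ gives $r^{2}\gtrsim (AC_{n}/c)(\log K+A\lambda/c)$, which is exactly the radius in the theorem once $A$ and $C_{n}$ are absorbed into the single dimensional constant $c_{n}$. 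The main technical obstacle is the scale balance: $t$ must be large enough that $e^{\pm ct}$ separates from $1$, small enough that $e^{\lambda t}$ is still manageable, and $r$ must exceed $\sqrt{t}$ by a margin big enough that the Gaussian tail defeats both $\log K$ and $\lambda t$ simultaneously. Tracking these competing scales through a single dimensional constant is the delicate bookkeeping step; the individual estimates are classical.
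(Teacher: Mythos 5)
Your proposal is correct and follows essentially the same route as the paper: the Feynman--Kac identity at time $t\sim 1/c$, a contradiction via the two-sided pinch $\tfrac12\phi(x_0)<\phi<2\phi(x_0)$, the split into paths that stay in $B$ versus those that exit, and the Gaussian tail bound calibrated so that the radius beats $\log(\|\phi\|_{L^\infty}/|\phi(x_0)|)+\lambda t$. Your only deviation is taking $t=A/c$ with $A$ a large absolute constant rather than the paper's $t=1/c$, which is in fact the more robust choice for the case $V-\lambda\le -c$, where one needs $e^{ct}/4>1$ rather than $e/4<1$.
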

Opposite statements (especially for the case $V=0$) are usually called 'doubling estimates' and guarantee that an eigenfunction can at most double its size in a certain region of space which 
then bounds the order with which it can vanish around a root (see e.g. Bakri \cite{bakri} or the survey of Zelditch \cite{zeld}). Let us first discuss the case $V(x) - \lambda \geq c$.
Then the result has the expected scaling and translates into
the classical $\sqrt{(V - \lambda)_{+}}$ factor in the Agmon metric. The proof of Theorem 1 yields a stronger result: the uniform estimate $V - \lambda \geq c$
is not necessary (it is only required 'w.r.t. to path integrals') and we can rephrase the condition. 

\begin{quote}
\textit{Local.} $\phi$ locally varies by a constant factor on scale $\sim 1/\sqrt{(V(x)-\lambda)_{+}}$.\\
\textit{Nonlocal.} $\phi$ varies locally by a constant factor on the scale $\sim \sqrt{t_x}$ where 
$$ t_x = \inf_{} \left\{t > 0:   \mathbb{E}_{x}~e^{\lambda t-\int_{0}^{t}{V(\omega(z))dz}} \leq \frac{1}{2} \right\}.$$
\end{quote}
The non-locality of the second
formulation incorporates the diffusive action of the partial differential equation. Moreover, the nonlocal formulation allows for $t$ to be large. 
At the crudest level, the two estimates coincide because
$$  \mathbb{E}_{x}~e^{\lambda t-\int_{0}^{t}{V(\omega(z))dz}}  \sim  1-\left(\lambda - V(x) \right)t.$$
This estimate is only correct up to first order; one might suspect that
the estimate should hold up to second order because Brownian motion is isotropic and therefore
by symmetry
$$ \qquad \mathbb{E}_x \left\langle \nabla V(x), \omega(t) -x\right\rangle = 0.$$
However, there is a curious contribution: Brownian motion moves to distance $\sim \sqrt{t}$ within time $t$ and this has the
curious effect of turning the local geometry of $V$ at second order into a first order
contribution in time
$$ \qquad \mathbb{E}_x  \left\langle \omega(t)-x,  (D^2V)(x) \omega(t)-x \right\rangle = t \Delta V(x).$$
This gives
 $$\mathbb{E}_{x}\ e^{-\int_{0}^{t}{V(\omega(z))dz}}  = 1  - V(x) t + \frac{t^2}{2} \left(V(x)^2 - \Delta V(x)\right) + o(t^2)$$
A geometric interpretation would be as follows: local convexity ($\Delta V(x) > 0$) yields to a stronger decay of the expectation and thus enforces
a stronger decay of the eigenfunction; conversely, local concavity enforces slightly less decay (compared to flat
potentials at the same numerical scale).

\begin{center}
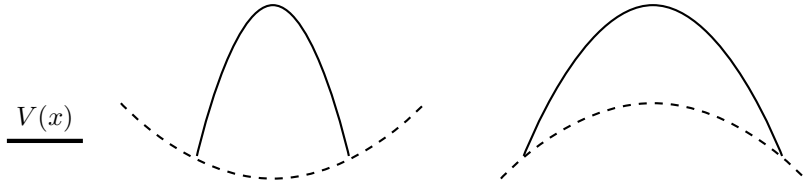
\begin{figure}[h!]
\begin{tikzpicture}[scale=1]
\draw [ultra thick] (-3.5 ,0.5) -- (-2.5,0.5);
\node at (-3,0.8) {$V(x)$};
\draw [dashed, thick, xshift=0cm] plot [smooth, tension=1] coordinates { (-2,1) (0,0) (2,1)};
\draw [thick, xshift=0cm] plot [smooth, tension=1] coordinates { (-1,0.3) (0,2.3) (1,0.3)};

\draw [dashed, thick, xshift=5cm] plot [smooth, tension=1] coordinates { (-2,0) (0,1) (2,0)};
\draw [thick, xshift=5cm] plot [smooth, tension=1] coordinates { (-1.7,0.3) (0,2.3) (1.7,0.3)};
\end{tikzpicture}
\caption{Two potentials having comparable numerical value. Local convexity enforces stronger decay of the eigenfunction, local concavity flatter decay.}
\end{figure}
\end{center}

\subsection{The case $V \leq \lambda$.} The description above only discusses the cases, where locally $V \geq \lambda$ (in order to stress the similarity to Agmon's
inequality), however, it is clear that a similar argument applies whenever $V \leq \lambda$. In that case the expectatation over
the path integrals will grow and this will imply variation at scale $\sqrt{t_x}$ where
$$ t_x = \inf_{} \left\{t > 0:   \mathbb{E}_{x}~e^{\lambda t-\int_{0}^{t}{V(\omega(z))dz}} \geq 2\right\}.$$
Put differently, if $V \leq \lambda$ in such a way that there is local growth in $t$ of $\mathbb{E}_{x}~e^{\lambda t-\int_{0}^{t}{V(\omega(z))dz}}$, then the only
way for
 $$  \phi(x) =  \mathbb{E}_{x}\left(\phi(\omega(t)) e^{\lambda t-\int_{0}^{t}{V(\omega(z))dz}} \right) $$
to be valid is for $\phi(\omega(t))$ to be, on average, smaller than $\phi(x)$
which indicates doubling of size on the length scale $\sqrt{t_x}$. In particular, the nonlocal formulation also applies to the
setting where the classical $\sqrt{(V-\lambda)_{+}}$ yields no more information.
Using this formulation, we can recover the classical intuition for
Laplacian eigenfunctions: if $-\Delta \psi = \kappa \psi$, then one expects $\psi$ to oscillate on the wavelength $\sim \kappa^{-1/2}$. In
our case, if $V \ll \lambda$, then
$$-\Delta \psi = (\lambda - V)\psi \sim \lambda \psi$$
and the variation estimate guarantees oscillation on scale $\sqrt{\lambda}/c$, where
$c \sim \lambda - V \sim \lambda$ and thus $\sqrt{\lambda}/c \sim \lambda^{-1/2}.$ 
Another way of seeing this is that we expect a doubling on the scale $\sim \sqrt{t_x}$ and whenever $V \ll \lambda$, then
$$ t_x = \inf_{} \left\{t > 0:   \mathbb{E}_{x}~e^{\lambda t-\int_{0}^{t}{V(\omega(z))dz}} \geq 2\right\} \sim  \inf_{} \left\{t > 0:   \mathbb{E}_{x}~e^{\lambda t} \geq 2\right\} \sim \frac{1}{\lambda}.$$

\subsection{Landscape from variation.} So far, we have only discussed the variation estimate; the relationship with landscapes 
is easily visualized: pick a sequence of points $x_1, x_2, \dots, x_k \dots \in \Omega$, compute $t_{x_j}$ and draw circles with radius $\sqrt{t_{x_j}}$
around the points. Smaller circles correspond to variation by a factor on a smaller spatial scale (i.e. faster growth/decay). The 'valleys' of the landscape
correspond to regions with smaller circles (crossing them causes a variation by a factor 2 and thus frequent crossing is equivalent to either large growth or large decay) 
whereas 'hills' correspond to regions larger circles. This essentially reproduces the landscape generated by $u$ since both $\sqrt{t_x}$ and $u$ may be regarded as mollifications of $1/V$.

\begin{figure}[h!]
\centering
\includegraphics[width=0.4\textwidth]{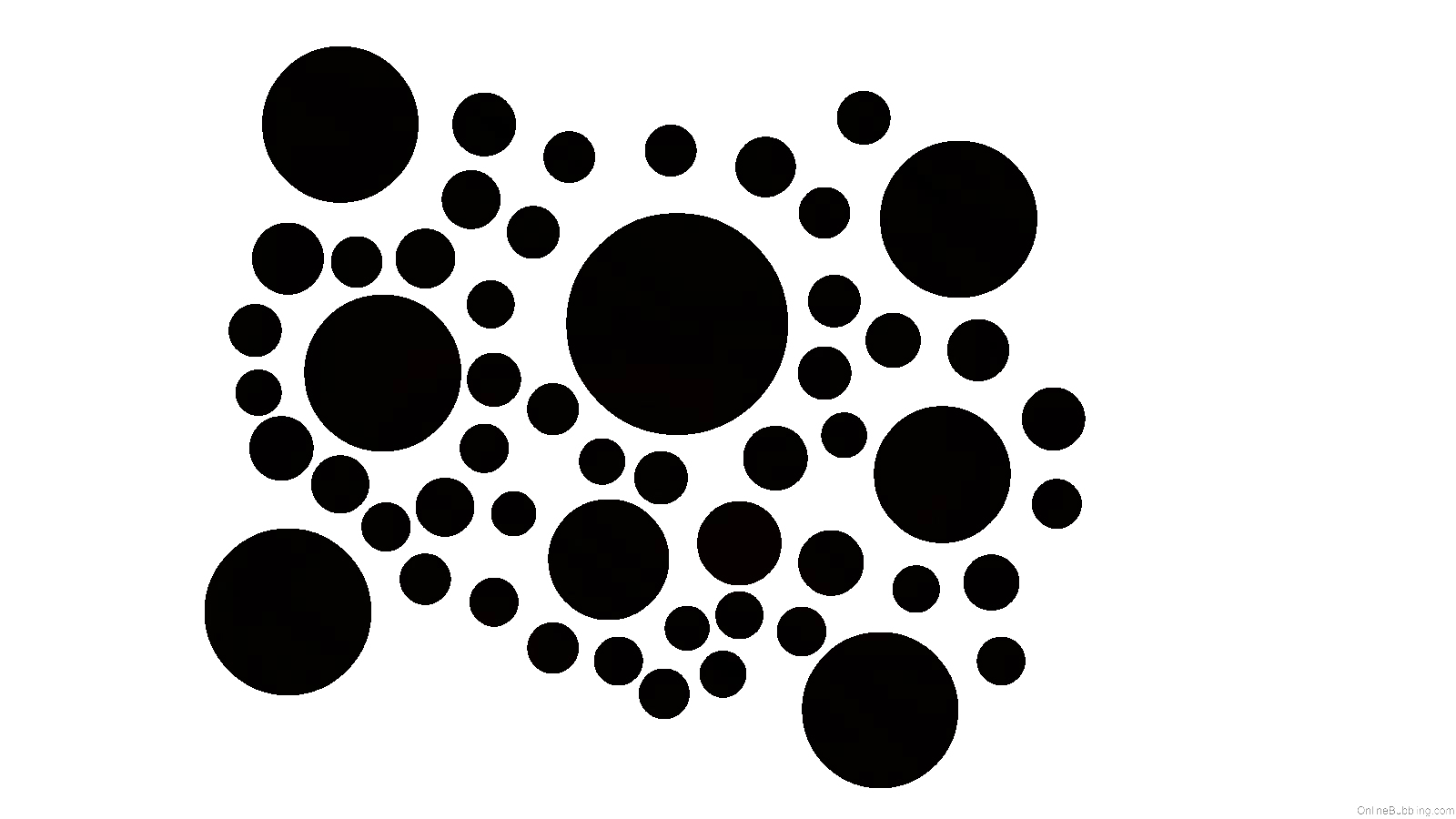}
\caption{An implicit description of the landscape by $\sqrt{t_{x_j}}$ balls around points $x_j$.}
\end{figure}

The computation of $t_{x_j}$ is clearly nontrivial, however, up to first order it is easy, since
$$  \mathbb{E}_{x} e^{-\int_{0}^{t}{V(B(\omega(z)))dz}} \sim  1 - \mathbb{E}_{x} \int_{0}^{t}{V(B(\omega(z)))dz}$$
and, by linearity, we can exchange expectation and integration
$$
 \mathbb{E}_{x} \int_{0}^{t}{V(B(\omega(z)))dz} = \int_{0}^{t}{   \mathbb{E}_{x}  V(B(\omega(z)))dz} \sim \int_{0}^{t}{  (e^{z\Delta} V)(x)dz}  = \left[ \left( \int_{0}^{t}{e^{z \Delta}dz} \right)*V\right](x).
$$
This is essentially accurate as along as $d(x, \partial \Omega) \gg \sqrt{t}$ while it 
overestimates the quantity as soon as $d(x, \partial \Omega) \lesssim \sqrt{t}$. In contrast, the landscape function $\lambda u(x)$ has a simpler and more convenient linear scaling 
in the eigenvalue $\lambda$. Indeed, $\lambda u(x)$ arises naturally from the following heuristic, which nicely clarifies 
how $u(x)$ interacts locally with the semigroup induced by $(-\Delta + V)$. \\

\textit{Heuristic.} We compute an expansion of $e^{t (\Delta - V)} u(x)$ for $t$ small in two different ways. Note that 
$$ (-\Delta + V)u = 1 \qquad \mbox{and thus, for $t$ small,} \qquad e^{t (\Delta - V)} u(x) = u(x) - t + o(t).$$
However, the semigroup may also be expanded using Feynman-Kac and for $t$ small
\begin{align*}
  (e^{t (\Delta - V)} u)(x) &= \mathbb{E}_{x}\left(u(\omega(t)) e^{-\int_{0}^{t}{V(\omega(z))dz}} \right) \\
&= \mathbb{E}_{x}\left(u(\omega(t)) \left(1-\int_{0}^{t}{V(\omega(z))dz}\right) \right)\\
& \sim u(x) -  u(x)\mathbb{E}_{x}\int_{0}^{t}{V(\omega(z))dz}.
\end{align*}
By matching the coefficient in the linear term, we get that
$$ u(x)\mathbb{E}_{x}\int_{0}^{t}{V(\omega(z))dz} \sim t.$$
Therefore, for $t$ sufficiently small
$$ \mathbb{E}_{x}\int_{0}^{t}{V(\omega(z))dz} \sim \frac{t}{u(x)}$$
and thus
$$  \mathbb{E}_{x}e^{\lambda t-\int_{0}^{t}{V(\omega(z))dz}} \sim 1 + \left( \lambda - \frac{1}{u(x)} \right) t.$$ 
This heuristic naturally recovers decay in the region $\left\{x:1/u(x) > \lambda \right\}$. The only inaccuracy is 
that we actually have $ \mathbb{E}_{x} u(\omega(t)) \sim u(x) + t\Delta u(x)$ .
Indeed, unless $|\Delta u(x)|$ is very big, we get from
$$ (-\Delta + V)u(x) = 1 \qquad \mbox{that} \qquad u(x) \sim \frac{1}{V(x)}.$$
In that case it is not very surprising that $1/u(x)$ is effective at predicting decay. However, the relationship runs
deeper than that: using again Feynman-Kac we get
\begin{align*}
e^{t(\Delta - V)}1 &=   \mathbb{E}_{x}e^{-\int_{0}^{t}{V(\omega(z))dz}} \\
e^{t(\Delta - V)^{-1}}1 &= 1 - tu(x) +o(t)
\end{align*}
Furthermore, information up to second order appears with the correct sign. We can rewrite $(-\Delta + V)u=1$ as
$$ u = \frac{1}{V} + \frac{\Delta u}{V}.$$
Assuming $\Delta u \sim 0$ gave the first-order approximation $u \sim 1/V$. The next natural step is
to iterate this 
$$ u \sim \frac{1}{V} + \frac{\Delta \left(\frac{1}{V}\right)}{V}  = \frac{1}{V} + 2\frac{|\nabla V|^2}{V^3} - \frac{\Delta V}{V^3}.$$ 
Restricting to local extrema, we see that (up to lower order terms) $1/u$ is
bigger than $V$ in local minima and smaller than $V$ in local maxima (thus recreating the behavior from above).

\section{New landscape functions}

\subsection{Nonlocal refinement.} 
 From now on, a 'landscape function' refers to any function
$h(x)$ with the property that for a fixed eigenvalue $\lambda$ the eigenfunction $(-\Delta + V)\phi = \lambda \phi$ satisfies
$$ |\phi(x)|   \leq h(x)  \| \phi \|_{L^{\infty}}.$$
$h(x) = 1$ is trivially admissible. We will continue to use $u(x)$ to denote
the classical landscape function given as the solution of $(- \Delta + V)u = 1$. It satisfies the inequality with $h(x) = \lambda u(x)$. 

\begin{theorem}[Landscape bootstrapping] Suppose $(-\Delta + V)\phi = \lambda \phi$ and $h(x)$ satisfies
$$ |\phi(x)| \leq h(x) \| \phi \|_{L^{\infty}},$$
then the same inequality holds for $h(x)$ replaced by
\begin{align*}
 h_1(x) &= \inf_{t \geq 0}{  \mathbb{E}_x  \left(  h((\omega_t) e^{\lambda t -\int_{0}^{t}{V(\omega(s))ds} }  \right)   } \\
&= \inf_{t \geq 0}{ e^{\lambda t} e^{t(\Delta - V)} h(x).   } 
\end{align*}
\end{theorem}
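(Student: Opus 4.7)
The plan is to read off the bootstrapping estimate almost directly from the Feynman--Kac identity derived in Section 2, treating $h$ as a pointwise majorant along every Brownian trajectory.

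First I would start from the identity
$$ \phi(x) =  \mathbb{E}_{x}\left(\phi(\omega(t)) e^{\lambda t-\int_{0}^{t}{V(\omega(z))dz}} \right),$$
which holds for every $t \geq 0$ and every eigenfunction. Since the exponential weight $e^{\lambda t - \int_0^t V(\omega(z))\,dz}$ is strictly positive (and integrable for each fixed $t$, using $V \geq 0$ and that Brownian motion is killed on $\partial \Omega$), I can pass absolute values through the expectation to obtain
$$ |\phi(x)| \leq \mathbb{E}_x\!\left( |\phi(\omega(t))|\, e^{\lambda t - \int_0^t V(\omega(z))\,dz}\right).$$
Here I use that, although Brownian motion may be killed before time $t$, extending $\phi$ by $0$ outside $\Omega$ is consistent with the Dirichlet condition, so the hypothesis $|\phi(y)| \leq h(y)\|\phi\|_{L^\infty}$ (extended by $0$) remains valid almost surely along $\omega$.

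Next I would apply the landscape hypothesis pointwise along the path, namely $|\phi(\omega(t))| \leq h(\omega(t)) \|\phi\|_{L^\infty}$, and factor the supremum norm out of the expectation:
$$ |\phi(x)| \leq \|\phi\|_{L^\infty}\, \mathbb{E}_x\!\left( h(\omega(t))\, e^{\lambda t - \int_0^t V(\omega(z))\,dz}\right).$$
Since the left side does not depend on $t$, I can take the infimum of the right side over $t \geq 0$, yielding exactly the first form of $h_1(x)$. The equivalence with the second, analytic formulation is then just the Feynman--Kac formula applied to the function $h$ (not to $\phi$): $e^{t(\Delta - V)} h(x) = \mathbb{E}_x\!\left(h(\omega(t)) e^{-\int_0^t V(\omega(z))\,dz}\right)$, which when multiplied by $e^{\lambda t}$ matches the probabilistic expression term by term.

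There is no real obstacle here beyond bookkeeping; the only point that needs a sentence of care is the interplay with the Dirichlet boundary (killed Brownian motion and the extension of $\phi$ and $h$ by zero), and the verification that $h_1$ is a well-defined measurable function, which is standard once one knows $h \in L^\infty$ (as is the case when $h$ equals $1$ or $\lambda u$, the initial choices one would iterate from). In particular, starting the iteration at $h_0(x) = 1$ or $h_0(x) = \lambda u(x)$ produces a pointwise decreasing sequence of admissible landscapes, which is the intended use of the theorem.
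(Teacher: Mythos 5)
Your proof is correct and follows essentially the same route as the paper: plug the landscape hypothesis into the Feynman--Kac identity $\phi(x) = \mathbb{E}_x\bigl(\phi(\omega(t))e^{\lambda t - \int_0^t V(\omega(z))dz}\bigr)$, pull out $\|\phi\|_{L^\infty}$, and take the infimum over $t$ since the left side is $t$-independent. Your extra care with absolute values and the killed-path/zero-extension convention is a minor tidying of the same argument, not a different one.
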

By letting $t \rightarrow 0$, we get $h_1(x) \leq h(x)$.
There is a delicate balance between two terms:
$e^{t(\Delta - V)} h(x)$ is a diffusion semigroup inducing exponential decay which is counteracted by the exponential growth $e^{\lambda t}$. 
 It is interesting to note that the function $\lambda u(x)$ plays a distinguished role
and is characterized by the fact that no purely local (i.e. $t \rightarrow 0$) considerations have any effects.\\

\begin{proposition} Suppose $f \in C^2(\Omega)$ satisfies
$$ \frac{d}{dt} e^{\lambda t} e^{t(\Delta - V)} f(x) \big|_{t=0} = 0 \qquad \mbox{then} \qquad f(x) = \lambda u(x).$$
\end{proposition}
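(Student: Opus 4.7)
The plan is a direct differentiation. Setting $g(t) := e^{\lambda t} e^{t(\Delta - V)} f(x)$, and using that $\Delta - V$ is the infinitesimal generator of the semigroup $e^{t(\Delta - V)}$ (which is the content of the Feynman--Kac identity stated in the introduction), the Leibniz rule yields
$$g'(0) \;=\; \lambda\, f(x) \;+\; (\Delta - V)\, f(x).$$
The hypothesis $g'(0) \equiv 0$ on $\Omega$ is to be read, in the spirit of the heuristic in the paragraph immediately preceding the proposition (where $(-\Delta+V)u = 1$ yields $e^{t(\Delta-V)}u = u - t + o(t)$, so that the linear-in-$t$ coefficient of $e^{\lambda t}e^{t(\Delta-V)}(\lambda u)$ is pinned by the source $-\lambda$), as the inhomogeneous Dirichlet problem
$$(-\Delta + V)\,f \;=\; \lambda \quad \text{in } \Omega, \qquad f\big|_{\partial \Omega} \;=\; 0,$$
obtained by matching linear-in-$t$ coefficients against those of the reference function $\lambda u(x)$.

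From here the identification $f = \lambda u$ is a one-line uniqueness statement: since $V \geq 0$, the operator $-\Delta + V$ under Dirichlet boundary conditions is positive and hence invertible, so the Poisson problem above has a unique solution. The landscape equation $(-\Delta + V)u = 1$ gives $(-\Delta + V)(\lambda u) = \lambda$, identifying $\lambda u$ as that unique solution and forcing $f \equiv \lambda u$.

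The main obstacle I expect is the translation from the pointwise Leibniz relation $\lambda f + (\Delta - V) f = 0$ to the inhomogeneous equation $(-\Delta + V) f = \lambda$: taken literally as a pointwise identity, the former looks like a homogeneous eigenvalue equation, and the correct reading of the derivative condition as a non-homogeneous Poisson equation relies on the linear-coefficient matching against $\lambda u$ carried out in the preceding heuristic paragraph (essentially the observation that the condition really encodes $(\Delta - V)f = -\lambda$, the linear coefficient of $e^{t(\Delta - V)}f$ being $-\lambda$, as it is for $u$ up to the factor $\lambda$). Once that translation is transparent, elliptic uniqueness finishes the proof in a single line.
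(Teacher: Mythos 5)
Your computation $g'(0)=\lambda f(x)+(\Delta-V)f(x)$ is the honest one, and the ``obstacle'' you flag at the end is real --- but it is a defect of the proposition as literally stated rather than of your argument. Taken at face value, $g'(0)\equiv 0$ gives the homogeneous eigenvalue equation $(-\Delta+V)f=\lambda f$, not the Poisson problem $(-\Delta+V)f=\lambda$; and conversely $f=\lambda u$ does \emph{not} satisfy the literal hypothesis, since
$$ \frac{d}{dt}\Big|_{t=0} e^{\lambda t}e^{t(\Delta-V)}(\lambda u)(x)=\lambda^2 u(x)+(\Delta-V)(\lambda u)(x)=\lambda\bigl(\lambda u(x)-1\bigr),$$
which vanishes only on the level set $\{\lambda u=1\}$. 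The paper's own proof contains exactly the same discrepancy: it expands $e^{t(\Delta-V)}f(x)$ as $1+t(\Delta-V)f(x)+\mathcal{O}(t^2)$, i.e.\ it silently normalizes the zeroth-order term $f(x)$ to $1$, so that the linear coefficient becomes $\lambda+(\Delta-V)f(x)$ and setting it to zero yields $(-\Delta+V)f=\lambda$. That is precisely the ``reinterpretation'' you propose, so your route and the paper's are the same in substance: once the condition is read as the inhomogeneous Dirichlet problem, your uniqueness step (positivity and hence invertibility of $-\Delta+V$ with Dirichlet data) coincides with the paper's one-line conclusion $f=(-\Delta+V)^{-1}\lambda=\lambda u$. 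If anything, your write-up is more careful than the paper's, because it makes explicit that the intended hypothesis is ``the linear-in-$t$ coefficient of the normalized quantity vanishes,'' equivalently $(\Delta-V)f=-\lambda$, rather than the vanishing of the literal $t$-derivative.
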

\begin{proof} The argument is immediate. Note that for $t$ small
\begin{align*}
 e^{\lambda t} e^{t(\Delta - V)} f(x)  &= (1 + \lambda t + \mathcal{O}(t^2)))(1 + t (\Delta - V)  f(x)  + \mathcal{O}(t^2))\\
&= 1 +t ( \lambda  +  (\Delta - V)f(x) )+ \mathcal{O}(t^2))
\end{align*}
and therefore
$ f(x) = (-\Delta + V)^{-1}\lambda = \lambda u(x).$
\end{proof}

\subsection{Computational tricks.}
The main results from the previous section allow for the generation of new landscape functions out of $\lambda u(x)$, however,
one should consider that solving $e^{t(\Delta - V)}u(x)$ may be as hard or harder than directly computing eigenfunctions.
The purpose of this section is to suggest a cheap way of creating computationally feasible improvements.
The original proofs \cite{fil, mol} demonstrating
$$|\phi(x)| \leq \lambda u(x) \|\phi\|_{L^{\infty}}$$
use Green's functions. A very simple argument that we could not find in the literature is as follows.
\begin{proof} $\phi$ is an eigenfunction and $-\Delta + V$ is elliptic operator. The maximum principle yields
$$ \phi(x) = (-\Delta + V)^{-1} \lambda \phi(x) \leq  (-\Delta + V)^{-1} \lambda \|\phi\|_{L^{\infty}} = \lambda \|\phi\|_{L^{\infty}}  (-\Delta + V)^{-1}1,$$
where the last term is precisely $u(x)$.
\end{proof}
  The very simple proof immediately suggests two improvements. The first improvement would be to iterate the inequality: let
$k \in \mathbb{N}$ be arbitrary. Then we have the inequality
$$ \phi(x) = (-\Delta + V)^{-k} \lambda^k \phi(x) \leq  (-\Delta + V)^{-k} \lambda^k \|\phi\|_{L^{\infty}} = \lambda^k \|\phi\|_{L^{\infty}}  (-\Delta + V)^{-k}1.$$
This variant was already known to Filoche \& Mayboroda; its downside is that the increased power on the eigenvalue tends to make the bounds
worse for higher eigenfunctions. There exists an elementary improvement that preserves linear scaling in the eigenvalue.

\begin{proposition} If $(-\Delta + V) \phi = \lambda \phi$, then
$$|\phi(x)| \leq \left( \lambda(-\Delta + V)^{-1} \min(\lambda u(x), 1) \right) \|\phi\|_{L^{\infty}}$$
\end{proposition}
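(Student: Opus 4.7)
The plan is to bootstrap the original Filoche--Mayboroda inequality by feeding a sharper pointwise bound on $|\phi|$ back into the resolvent identity $\phi = \lambda(-\Delta+V)^{-1}\phi$. The key observation is that at every point $y\in\Omega$ the eigenfunction satisfies two a priori bounds: the trivial one $|\phi(y)|\leq \|\phi\|_{L^\infty}$, and the landscape bound $|\phi(y)|\leq \lambda u(y)\|\phi\|_{L^\infty}$. Taking the pointwise minimum yields
$$|\phi(y)| \leq \min(\lambda u(y),1)\,\|\phi\|_{L^\infty} \qquad \text{for all } y\in\Omega,$$
which is a genuine improvement over either inequality alone, since the landscape bound is only useful where $\lambda u(y) < 1$.

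The first step I would take is to record the positivity-preserving character of $(-\Delta+V)^{-1}$: since $V\geq 0$ and we impose Dirichlet boundary conditions on a smooth bounded $\Omega$, the Green's function of $-\Delta+V$ is nonnegative (equivalently, the maximum principle holds). As a consequence, if $g\leq G$ pointwise then $(-\Delta+V)^{-1}g\leq (-\Delta+V)^{-1}G$ pointwise, and applying this to both $\phi\leq |\phi|$ and $-\phi\leq |\phi|$ gives the operator-level triangle inequality
$$\bigl|(-\Delta+V)^{-1}\phi(x)\bigr| \leq (-\Delta+V)^{-1}|\phi|(x).$$

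The second step is to combine this with the eigenvalue equation rewritten as $\phi(x)=\lambda(-\Delta+V)^{-1}\phi(x)$ to obtain
$$|\phi(x)| = \lambda\bigl|(-\Delta+V)^{-1}\phi(x)\bigr| \leq \lambda(-\Delta+V)^{-1}|\phi|(x).$$
Now I insert the improved pointwise majorant $|\phi|\leq \min(\lambda u,1)\|\phi\|_{L^\infty}$ and use monotonicity once more to arrive at
$$|\phi(x)| \leq \lambda(-\Delta+V)^{-1}\bigl[\min(\lambda u,1)\bigr](x)\,\|\phi\|_{L^\infty},$$
which is exactly the claim. Since the linearity of $(-\Delta+V)^{-1}$ pulls the scalar $\|\phi\|_{L^\infty}$ out, nothing further is needed.

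There is no serious obstacle: the whole argument is essentially the same one-line resolvent/maximum-principle proof given earlier for the Filoche--Mayboroda bound, applied to the sharper input $\min(\lambda u,1)$ rather than the constant $1$. The only point requiring mild care is the positivity of the resolvent, which is standard under the stated regularity hypotheses on $\Omega$ and $V\geq 0$. One can also note the sanity check $\min(\lambda u,1)\leq 1$, so that the new bound is never worse than the original $\lambda u(x)$, and is strictly better wherever $\lambda u > 1$ on a set of positive measure seen by the Green's function of $x$.
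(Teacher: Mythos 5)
Your argument is correct and is essentially the paper's own proof: both bootstrap the resolvent identity $\phi = \lambda(-\Delta+V)^{-1}\phi$ using positivity of the resolvent and the improved majorant $|\phi|\leq\min(\lambda u,1)\|\phi\|_{L^{\infty}}$. You merely spell out the maximum-principle/positivity step that the paper leaves implicit.
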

\begin{proof} We bootstrap the original inequality and have
\begin{align*}
 \phi= (-\Delta + V)^{-1} \lambda \phi &\leq  \lambda (-\Delta + V)^{-1}  |\phi| \leq \lambda (-\Delta + V)^{-1} \min(\lambda u(x),1) \|\phi\|_{L^{\infty}}
\end{align*}
\end{proof}

\begin{figure}[h!]
\centering
\includegraphics[width=0.6\textwidth]{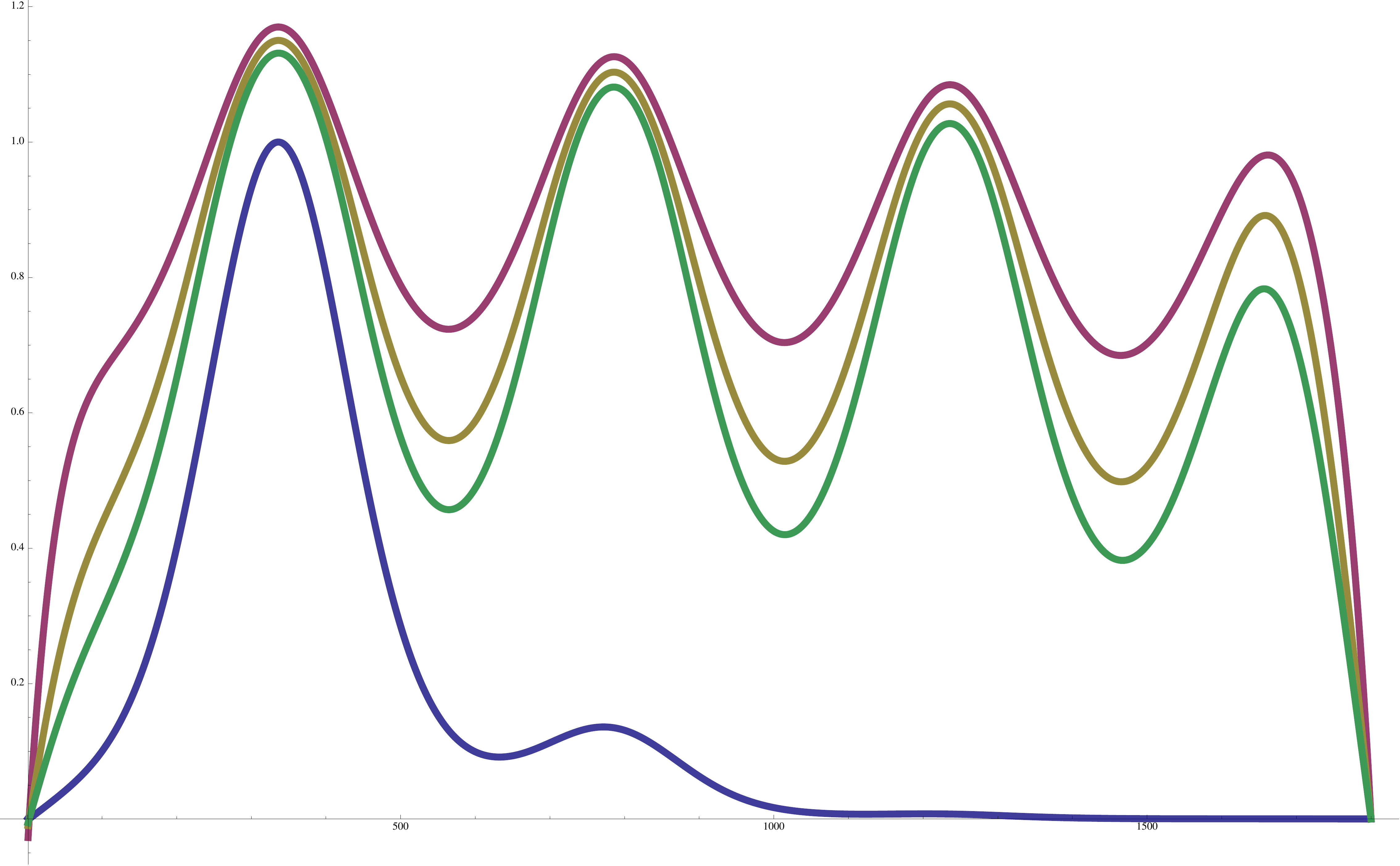}
\captionsetup{width=0.8\textwidth}
\caption{The profile $\phi_1/\|\phi_1\|_{L^{\infty}}$ (blue), the landscape function $u(x)$ (purple) and the first two iterations (yellow, green) of Proposition 3.}
\end{figure}

The reason why such a simple argument could indeed be effective is that the landscape function is sometimes bigger than 1; in that
case we can perform a simple cut-off at 1 and iterate to get additional information; alternatively, one could use the bound from the
previous section in a neighborhood to propagate the gain from the cutoff to nearby regions. Clearly, that simple argument could
also be iterated and, by the same token,
$$|\phi(x)| \leq \left(   \lambda(-\Delta + V)^{-1}  \min \left(1,   \lambda(-\Delta + V)^{-1} \min(\lambda u(x), 1) \right) \right) \|\phi\|_{L^{\infty}}.$$
Like the original landscape function, these improvements can only be effective for $\lambda$ small: for $\lambda$ large, we will have
$ \lambda u(x) \geq 1$ everywhere except for small regions close to the boundary and thus $\min(\lambda u(x), 1) \sim 1$ and the argument will merely recreate
$u$.

\section{Proofs}
\subsection{Proof of Theorem 1.} 
The main idea has already been outlined above. We use
$$ \phi(x_0) =  \mathbb{E}_{x_0}\left(\phi(\omega(t)) e^{\lambda t-\int_{0}^{t}{V(\omega(z))dz}} \right)$$
for a suitable time $t$. Almost all paths are contained within a ball of radius $\sim \sqrt{t}$ (suggesting to
set $t=1/c$). However, some Brownian motion paths will actually leave that ball and may give a large
additional contribution but the likelihood of that happening is small. Using the standard reflection principle implies for one-dimensional Brownian motion $B(t)$ that
$$ \mathbb{P}\left( \max_{0 < s < t}{B(s)} \geq a \right) = 2 \mathbb{P}(B(t) \geq a).$$
This implies that the likelihood of a Brownian motion leaving a ball of fixed radius for some time $0 < s < t$ can be bounded by the
likelihood of having left the ball at time $t$.
More precisely, the standard Gaussian heat kernel estimate yields for some universal constants $c_1, c_2 > 0$ (depending only on the dimension) and fixed
$t > 0$
$$ \mathbb{P}(\|\omega(t) - \omega(0)\| \geq \delta t^{1/2}) \leq c_1 e^{-c_2 \delta^2}.$$
This holds at a much greater level of generality \cite{fern}. The proof consists of
showing that all the quantities work together as described by doing the algebra.

\begin{proof} 
We assume that
$$ \forall x \in B: \frac{1}{2} |\phi(x_0)| \leq |\phi(x)| \leq 2 |\phi(x_0)|$$
with 
$$ B = B\left(x_0, \frac{c_3}{\sqrt{c}}  \sqrt{\frac{\lambda}{c} + \log{ \left(c_3  \frac{\|\phi \|_{L^{\infty}}}{| \phi(x_0)|}  \right) }}     \right) \subset \Omega,$$
where $c_3$ is some constant we are allowed to choose depending on $c_1, c_2$.
We start with assuming that
$$ V - \lambda \geq c \qquad \mbox{uniformly on}~B.$$
The case $V(x) - \lambda \leq -c$ is similar and will be described afterwards.
We can assume without loss of generality that $\phi(x_0) > 0$. The time scale of
the argument will be $ t = 1/c$.
For a Brownian motion $\omega(s)$ started in $x_0$ we distinguish two cases:
\begin{itemize} 
\item \textbf{Case 1 (generic).} $ \left\{ \omega(s): 0 \leq s \leq t \right\} \subset B$
\item \textbf{Case 2 (rare).} $ \left\{ \omega(s): 0 \leq s \leq t \right\} \not\subset B.$
\end{itemize}
Case 1 is very easy to deal with: in that case, we can easily bound any single path fully via
\begin{align*}
\phi(\omega(t)) e^{\lambda t-\int_{0}^{t}{V(\omega(z))dz}}  &\leq 2 \phi(x) e^{ \int_{0}^{t}{( \lambda - V(\omega(z)))dz}}  \\
&\leq2e^{- t c} \phi(x_0) 
\end{align*}
and the same argument holds in expectation for all paths conditioned on Case 1.
 It remains to consider Case 2. If it occurs, then we only have the trivial bound (using $V \geq 0$)
$$ \phi(\omega(t)) e^{\lambda t-\int_{0}^{t}{V(\omega(z))dz}}  \leq \|\phi \|_{L^{\infty}} e^{\lambda t},$$
which is large but the likelihood of that event is small. Combining this with the upper bound on the likelihood of Case 2 gives
\begin{align*}
\phi(x_0) =  \mathbb{E}_{x_0}\left(\phi(\omega(t)) e^{\lambda t-\int_{0}^{t}{V(\omega(z))dz}} \right) \leq 2e^{- tc}\phi(x_0) + c_1 e^{-c_2 \delta^2} e^{\lambda t} \|\phi \|_{L^{\infty}}.
\end{align*}
We derive a contradiction by setting $t = 1/c$, which gives
$$ \delta = c_3\sqrt{\log{ \left(c_3  \frac{e^{\frac{\lambda}{c}} \|\phi \|_{L^{\infty}}}{| \phi(x_0)|}  \right)}  }.$$
Simple algebra allows us to reformulate the inequality as
$$ \phi(x_0) \leq 2 e^{-1} \phi(x_0) + \varepsilon_{c_1, c_2, c_3} \phi(x_0),$$
where
$$ \varepsilon_{c_1, c_2, c_3} = \frac{c_1}{c_3^{c_2 c_3^2}} e^{-\frac{\lambda}{c}(c_2 c_3^2 - 1)} \left( \frac{|\phi(x_0)|}{\|\phi\|_{L^{\infty}}} \right)^{c_2 c_3^2 - 1} $$
 can be made arbitrarily small by making $c_3$ sufficiently large (depending only on $c_1, c_2$). This gives a contradiction since $2e^{-1} < 1$ and concludes the first case. The other statement to consider is
$$ V - \lambda \leq -c \qquad \mbox{uniformly on}~B.$$
Here, essentially all signs are reversed and we show that there is too much local growth.
We can again distinguish Case 1 and Case 2 and get for Case 1 that
 \begin{align*}
\phi(\omega(t)) e^{\lambda t-\int_{0}^{t}{V(\omega(z))dz}}  &\geq \frac{1}{2} \phi(x_0) e^{ \int_{0}^{t}{( \lambda - V(\omega(z)))dz}}  \\
&\geq \frac{1}{2}e^{t c} \phi(x_0).
\end{align*}
The second case is again completely without control and we can only use the trivial estimate
$$ \phi(\omega(t)) e^{\lambda t-\int_{0}^{t}{V(\omega(z))dz}}  \geq -e^{\lambda t}\|\phi\|_{L^{\infty}}.$$
Altogether, this yields
\begin{align*}
\phi(x_0) =  \mathbb{E}_{x_0}\left(\phi(\omega(t)) e^{\lambda t-\int_{0}^{t}{V(\omega(z))dz}} \right) &\geq \mathbb{P}(\mbox{Case 1}) \frac{1}{2}e^{tc}\phi(x_0) - c_1 e^{-c_2 \delta^2} e^{\lambda t} \|\phi \|_{L^{\infty}}.
\end{align*}
For $c_3$ sufficiently large, we can ensure that $\mathbb{P}(\mbox{Case 1}) \geq 1/2$. Thus, for $c_3$ sufficiently large
\begin{align*}
\phi(x_0) =  \mathbb{E}_{x_0}\left(\phi(\omega(t)) e^{\lambda t-\int_{0}^{t}{V(\omega(z))dz}} \right) &\geq \frac{1}{4}e^{tc}\phi(x_0) - c_1 e^{-c_2 \delta^2} e^{\lambda t} \|\phi \|_{L^{\infty}}.
\end{align*}
Plugging things in as before yields (using again $t = 1/c$ and therefore same value of $\delta$ as before)
$$ \phi(x_0) \geq \frac{e^{\delta}}{4}\phi(x_0) - \varepsilon_{c_1, c_2, c_3} \phi(x_0),$$
where by the same computation as above $e^{\delta}/4$ can be made bigger than 2 by choosing $c_3$ sufficiently large and
$$ \varepsilon_{c_1, c_2, c_3} = \frac{c_1}{c_3^{c_2 c_3^2}} e^{-\frac{\lambda}{c}(c_2 c_3^2 - 1)} \left( \frac{|\phi(x_0)|}{\|\phi\|_{L^{\infty}}} \right)^{c_2 c_3^2 - 1} $$
can be made arbitrarily small by making $c_3$ sufficiently large.
\end{proof}

\subsection{Proof of the Theorem 2.}
\begin{proof}
The proof used uses the identity for all $t \geq 0$ to introduce an infimum.
\begin{align*}
\phi(x) = (e^{\lambda t}e^{t (\Delta - V)}\phi)(x) &=  \mathbb{E}_x  \left(  \phi(B_t(\omega)) e^{\lambda t -\int_{0}^{t}{V(B(\omega(s)))ds} }  \right)   \\
&=  \inf_{t \geq 0}{  \mathbb{E}_x  \left(  \phi(B_t(\omega)) e^{\lambda t -\int_{0}^{t}{V(B(\omega(s)))ds} }  \right)}. 
\end{align*}
By assumption $\phi$ is dominated by a landscape function $h(x)$ and thus
\begin{align*}  \inf_{t \geq 0}{  \mathbb{E}_x  \left(  \phi(B_t(\omega)) e^{\lambda t -\int_{0}^{t}{V(B(\omega(s)))ds} }  \right)}  &\leq 
 \|\phi\|_{L^{\infty}} \inf_{t \geq 0}{  \mathbb{E}_x  \left(  h(B_t(\omega)) e^{\lambda t -\int_{0}^{t}{V(B(\omega(s)))ds} }  \right)} \\
&=  \|\phi\|_{L^{\infty}}  \inf_{t \geq 0}{ e^{\lambda t} e^{t(\Delta - V)} h(x)}.
\end{align*}
This concludes the argument.
\end{proof}

\subsection{Refined variation estimate} Since Brownian motion is isotropic, the proof of the variation estimate can cover 
a stronger result: simply put, the variation estimate is true because of \text{curvature} of the graph. More generally, we can show that if
$ h(x): \Omega \rightarrow \mathbb{R}$ is another function satisfying $h(x_0) = 0$.
$$ \forall x \in B \qquad \left| e^{\lambda t}e^{t (\Delta -V)}h(x)\right| \leq \frac{1}{100}\phi(x)$$
for the value of $t$ for which we wish to apply the variation estimate,
then the function
$$ \phi(x) - h(x) \qquad \mbox{doubles its sizes on} ~ B.$$
The best example is perhaps given by $V=0$ (eigenfunctions of the Laplace operator $-\Delta \phi = \lambda \phi$) and $h(x)$ being the best linear approximation of $\phi$ in $x_0$
$$ h(x) = \left\langle \nabla \phi(x_0), x- x_0 \right\rangle.$$
This function is essentially invariant under heat flow for $d(x_0, \partial \Omega) \gg \sqrt{t}$ (with a negligible contribution from $|\nabla \phi(x_0)|$ that can be made precise). The variation estimate implies that $\phi(x) - h(x)$ doubles
its size and since we have removed the tangent plane this implies that $\phi(x) - h(x)$ can't be too small because there is curvature in the graph.
This also explains why the variation estimate does not apply when $V \sim \lambda$: then the functions have no guaranteed curvature
 $$-\Delta \phi = (\lambda - V)\phi \sim 0$$ and the doubling in size, if present at all, could possible vanish once an affine function is removed.\\

\textbf{Acknowledgement.}  I am grateful to Ronald R. Coifman and Peter W. Jones for extensive discussions. The author was partially supported by an AMS-Simons travel grant
and INET grant $\#$INO15-00038.

\end{document}